 \newtheorem{thm}{Theorem}[section]
 \newtheorem{lem}[thm]{Lemma}
 \newtheorem{prop}[thm]{Proposition}
    \theoremstyle{definition}
 \newtheorem{defn}[thm]{Definition}
   \newtheorem{assn}[thm]{Assumption}
 \theoremstyle{remark}
 \newtheorem{rem}[thm]{Remark}
\numberwithin{equation}{section}
\begin{document}

\title[Automorphism groups of Weyl algebras]{Automorphism groups of Weyl algebras}

\author[No-Ho Myung]{No-Ho Myung}
\address{Department of Mathematics\\
         Chungnam  National University\\
          99 Daehak-ro,   Yuseong-gu, Daejeon 34134, Korea}
\email{nhmyung@cnu.ac.kr}             

\author[Sei-Qwon Oh]{Sei-Qwon Oh}
\address{                            
         Department of Mathematics\\
         Chungnam National University  \\
         99 Daehak-ro,   Yuseong-gu, Daejeon 34134, Korea}
\email{sqoh@cnu.ac.kr}             




\subjclass[2010]{17B63, 16S80}

\keywords{Poisson  algebra,  Weyl algebra}



\begin{abstract}
Here we construct a monomorphism from the  semigroup of all endomorphisms of the $n$-th Weyl algebra into the semigroup of all Poisson endomorphisms of the $n$-th Poisson Weyl algebra such that its restriction to the  automorphism group of the $n$-th Weyl algebra is a monomorphism into the Poisson automorphism group of the $n$-th Poisson Weyl algebra.
\end{abstract}

\maketitle


\section{Introduction}
Throughout the article, we denote by ${\bf k}$  an algebraically closed field of characteristic zero.
Suppose that $A$ is an algebra and let $\hbar\in A$ be a nonzero, nonunit, non-zero-divisor and central element such that
$A/\hbar A$ is commutative. Then $A/\hbar A$ is a nontrivial commutative algebra as well as a Poisson algebra with the Poisson bracket
\begin{equation}\label{PBRACKET2}
\{\overline{a}, \overline{b}\}=\overline{\hbar^{-1}(ab-ba)}
\end{equation}
for $\overline{a}, \overline{b}\in A/\hbar A$. Moreover, if there is an element  $0\neq\lambda\in{\bf k}$ such that $\hbar-\lambda$ is a  nonunit in $A$ then we obtain a  nontrivial algebra $A/(\hbar-\lambda) A$ with the multiplication induced by that of $A$. To make terminologies clear, we will call such an element $\hbar$ a {\it regular element} of $A$,
 the Poisson algebra $A/\hbar A$ a {\it semiclassical limit} of $A$, the algebra $A$ a {\it quantization} of $A/\hbar A$, and a nontrivial algebra $A/(\hbar-\lambda)A$ a {\it deformation} of  $A/\hbar A$.  Namely, by a regular element  $\hbar\in A$ we mean a nonzero, nonunit, non-zero-divisor and central element of $A$ such that $A/\hbar A$ is commutative.

There are many evidences that Poisson structures of Poisson algebras are analogues of algebraic structures of their quantized algebras. For example, see \cite{BrGo} and \cite{HoKa}. In \cite[Conjecture 1]{BeKo}, Kanel-Belov and Kontsevich conjectured that the automorphism group of the Weyl algebra is isomorphic to the Poisson automorphism group of the Poisson Weyl algebra. In \cite{BeGrElZh}, \cite{BeKo} and  \cite{ChOh3}, one can find positive evidences  for Kanel-Belov and Kontsevich's conjecture. A main aim of this article is to find an infinite class of deformations  of the Poisson Weyl algebra which are isomorphic to the Weyl algebra and to construct a monomorphism from the  automorphism group of the $n$-th Weyl algebra into the Poisson automorphism group of the $n$-th Poisson Weyl algebra, which is also a positive evidence  for Kanel-Belov and Kontsevich's conjecture.

Let $t$ be an indeterminate.
In section 2, we consider a ${\bf k}[t]$-algebra $A$ such that $t$ is a regular element of $A$ and that an infinite class $\widehat{A}$ of deformations of the semiclassical limit $\overline{A}:=A/tA$. Then we construct a natural map
 $\widehat{\Gamma}$ from $\widehat{A}$ onto  $\overline{A}$ and a homomorphism $\varphi$ 
from the semigroup of all endomorphisms of $\widehat{A}$  into the semigroup of all Poisson endomorphisms of $\overline{A}$ such that its restriction $\varphi|_{\widehat{A}}$ is a group homomorphism into the Poisson automorphism group of $\widehat{A}$.
 (See Proposition~\ref{RIHOMO} and Theorem~\ref{GRHOM}.)
 In  section 3, we find a ${\bf k}[t]$-algebra $A_n$ which has  a regular element $t$ such that the semiclassical limit
  $A_n/tA_n$ is Poisson isomorphic to the $n$-th Poisson Weyl algebra $B_n$ and also find an infinite class of deformations of $B_n$ that are isomorphic to the $n$-th Weyl algebra $W_n$. (See Proposition~\ref{PWA} and Proposition~\ref{PWA2}.)
 In section 4,  we construct a monomorphism from the automorphism group of $W_n$ into the Poisson automorphism group of $B_n$ by using the results in \S2. (See Theorem~\ref{WCASE}.)

 \medskip

Recall several basic terminologies.
(1) Let $S$ be a commutative ring. Given an endomorphism $\beta$ on an $S$-algebra $R$, an $S$-linear map $\nu$ is said to be a {\it left $\beta$-derivation} on $R$ if
$\nu(ab)=\beta(a)\nu(b)+\nu(a)b$ for all $a,b\in R$. For such a pair $(\beta,\nu)$, we denote by $R[z;\beta,\nu]$ the skew polynomial $S$-algebra. Note that $R[z;\beta,\nu]$ is a  free left $R$-module with basis $\{z^i\}_{i\geq0}$. Refer to \cite[\S2]{GoWa2} for details of a skew polynomial algebra.

(2) A commutative ${\bf k}$-algebra $R$ is said to be a {\it Poisson algebra} if there exists a bilinear product $\{-,-\}$ on $R$, called a {\it Poisson bracket}, such that $(R, \{-,-\})$ is a Lie algebra with $\{ab,c\}=a\{b,c\}+\{a,c\}b$ for all $a,b,c\in R$.
A derivation $\alpha$ on $R$ is said to be a {\it Poisson derivation} if $\alpha(\{a,b\})=\{\alpha(a),b\}+\{a,\alpha(b)\}$ for all $a,b\in R$. Let $\alpha$ be a Poisson derivation on $R$ and let $\delta$ be a derivation on $R$ such that
$$\delta(\{a,b\})-\{\delta(a),b\}-\{a,\delta(b)\}=\alpha(a)\delta(b)-\delta(a)\alpha(b)$$
for all $a,b\in R$. By \cite[1.1]{Oh8},  the polynomial ${\bf k}$-algebra $R[z]$ is a Poisson  algebra  with Poisson bracket $\{z,a\}=\alpha(a)z+\delta(a)$ for all $a\in R$. Such a Poisson polynomial algebra
$R[z]$ is denoted by $R[z;\alpha,\delta]_p$ in order to distinguish it from skew polynomial algebras. If $\alpha=0$ then
we write $R[z;\delta]_p$ for $R[z;0,\delta]_p$ and if $\delta=0$ then we write $R[z;\alpha]_p$ for $R[z;\alpha,0]_p$.

(3) In  Poisson algebras,  an algebra homomorphism $f$ is said to be a {\it Poisson homomorphism} if it satisfies $f(\{a,b\})=\{f(a),f(b)\}$ for all $a,b$.
For an algebra $S$, denote by $\text{End}(S)$ and $\text{Aut}(S)$ the sets of all endomorphisms and automorphisms of $S$, respectively. Likewise, for a Poisson algebra $R$, denote by $\text{P.End}(R)$ and $\text{P.Aut}(R)$ the sets of all Poisson endomorphisms and Poisson automorphisms of $R$, respectively.
Note that $\text{End}(S)$ and $\text{P.End}(R)$ are semigroups and $\text{Aut}(S)$ and $\text{P.Aut}(R)$ are groups.


\section{Automorphism groups}

Let us begin with recalling  the natural map in \cite[\S1]{Oh12}. The following assumption is a modification of   \cite[Notation 1.1]{Oh12}.

\begin{assn}\label{ASSUM}
Let $x_1,\ldots, x_n$ be indeterminates. A finite product of $x_1, x_2,\ldots, x_{n}$ is called a {\it monomial}.
For a monomial ${\bf x}=x_{i_1}\cdots x_{i_k}$, ${\bf x}$ is said to be a {\it standard} monomial if
$1\leq i_1\leq\cdots\leq i_k\leq n$.

(1) Set  ${\bf k}^*:={\bf k}\setminus\{0\}$.

(2)
Let $A$ be a ${\bf k}[t]$-algebra generated by  $x_1,\ldots, x_n$ subject to relations
$f_1,\ldots, f_r$, where ${\bf x}$ denotes a monomial in $x_1,\ldots, x_n$ and, for $i=1,\ldots,r$,
$$f_i=\sum_{\bf x}a^i_{\bf x}(t){\bf x},\ \ a^i_{\bf x}(t)\in{\bf k}[t].$$

(3) Assume that $t\in{\bf k}[t]$ is a regular element in $A$ and thus the semiclassical limit
$$\overline{A}:= A/tA$$ is a Poisson algebra with Poisson bracket (\ref{PBRACKET2}).

(4) Assume that, for each $q\in{\bf k}^*$,  deformation $A_q:=A/(t-q)A$ is nonzero.
 Hence $\overline{A}$ and $A_q$ are ${\bf k}$-algebras generated by $x_1,\ldots,x_n$
subject to the relations
$$f_1|_{t=0},\ldots, f_r|_{t=0}$$ and $$f_1|_{t=q},\ldots, f_r|_{t=q},$$ respectively.

(5) Assume that $A$ has a ${\bf k}[t]$-basis $\{\xi_i|i\in I\}$  such that, for each $q\in {\bf k}^*$,
 $$\{\overline{\xi_i}:=\xi_i+tA|i\in I\},\ \ \ \{\xi_i+(t-q)A|i\in I\}$$ are ${\bf k}$-bases of $\overline{A}$ and $A_q$, respectively.
Hence every element $f(t)\in A$ is expressed uniquely by
$$f(t)=\sum_ia_i(t)\xi_i,\ \ a_i(t)\in{\bf k}[t]$$
and, for each $q\in {\bf k}^*$,
$$\begin{array}{c} f(t)+(t-q)A=\sum_ia_i(q)(\xi_i+(t-q)A)\in A_q,\ \ \
\overline{f(t)}=\sum_ia_i(0)\overline{\xi_i}\in \overline{A}.
\end{array}$$

We will still write $\xi_i$ for $\overline{\xi_i}\in \overline{A}$ and $\xi_i+(t-q)A\in A_q$ if no confusion arises.
Thus, for $f(t)\in A$, the expressions
   $$\overline{f(t)}=f(t)|_{t=0}\in \overline{A},\ \ \ f(t)+(t-q)A=f(t)|_{t=q}\in A_q$$
make sense. For $f(t)\in A$, we often write $f(0)$ and $f(q)$ for $f(t)|_{t=0}$ and $f(t)|_{t=q}$, respectively.
\end{assn}

\begin{rem}\label{GEN}
Since ${\bf k}^*$ is an infinite set, we obtain  the infinite class
$${\bf D}:=\{A_q| q\in{\bf k}^*\}$$ consisting of deformations $A_q$ which are nontrivial ${\bf k}$-algebras by Assumption~\ref{ASSUM}(4).

Let $\widehat{q}$ be the parameter taking values in ${\bf k}^*$. That is, $\widehat{q}$ is the function from ${\bf k}^*$ into
${\bf k}$ defined by
$$\widehat{q}(q)=q$$
for each $q\in{\bf k}^*$. Denote by
  $$\text{$A_{\widehat{q}}$ \ the ${\bf k}$-algebra obtained from $A_q$ by replacing $q\in A_q$ by $\widehat{q}$.}$$
Thus $A_{\widehat{q}}$ means the class ${\bf D}$. Note that
$$\widehat{q}^{-1}\in A_{\widehat{q}}$$
since  $q^{-1}\in A_q$ for each $q\in{\bf k}^*$. For a nonzero polynomial $f(t)\in{\bf k}[t]$, $f(\widehat{q})$ is invertible in
$A_{\widehat{q}}$ if and only if $f(q)\neq0$ for each $q\in{\bf k}^*$. Hence, since ${\bf k}$ is an algebraically closed field, $f(\widehat{q})$ is invertible in
$A_{\widehat{q}}$ if and only if $f(\widehat{q})=a\widehat{q}^k$ for some $0\neq a\in{\bf k}$ and  nonnegative integer $k$.
\end{rem}

\bigskip

Let $\gamma$  denote the ${\bf k}$-algebra homomorphism
$$\gamma:A\to \prod_{q\in{\bf k}^*}A_q,\ \ f(t)\mapsto (f(q))_{q\in{\bf k}^*}. $$
Then $\gamma$ is injective by \cite[Lemma 1.2]{Oh12} and thus there exists  the composition
$$\Gamma=\overline{\gamma}\circ\gamma^{-1}:\gamma(A)\overset{\gamma^{-1}}{\longrightarrow} A\overset{\overline{\gamma}}{\longrightarrow} \overline{A}=A/tA,$$
where $\overline{\gamma}:A\to\overline{A}$ is the canonical projection.
Denote by \ $\widehat{}$ \ the ${\bf k}$-algebra homomorphism
$$\widehat{}\ :A_{\widehat{q}}\longrightarrow \prod_{q\in{\bf k}^*}A_q,\ \ f\mapsto (f|_{\widehat{q}=q})_{q\in{\bf k}^*}.$$
Let $\widehat{A}$ be the inverse image of $\gamma(A)$ by \ $\widehat{}$\ .  Namely, $\widehat{A}=\widehat{}\ ^{-1}(\gamma(A))$.

\begin{lem}\label{SHAT}
(1) $\widehat{A}=\{f(\widehat{q})\in A_{\widehat{q}}\ |\ f(t)\in A\}.$

(2)  $\{\widehat{q}^j\xi_i\ | j\geq0, i\in I\}$ is a ${\bf k}$-basis of $\widehat{A}$.
\end{lem}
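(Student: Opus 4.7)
The plan rests on the following identification of $A_{\widehat{q}}$: the ${\bf k}[t]$-basis $\{\xi_i\}_{i\in I}$ of $A$ from Assumption~\ref{ASSUM}(5) endows $A_{\widehat{q}}$ with the structure of a ${\bf k}[\widehat{q},\widehat{q}^{-1}]$-module with basis $\{\xi_i\}_{i\in I}$, so every element of $A_{\widehat{q}}$ has a unique finite expansion $\sum_i a_i(\widehat{q})\xi_i$ with $a_i(\widehat{q})\in{\bf k}[\widehat{q},\widehat{q}^{-1}]$. Under this expansion the map $\widehat{\ }$ sends $\sum_i a_i(\widehat{q})\xi_i$ to $\bigl(\sum_i a_i(q)\xi_i\bigr)_{q\in{\bf k}^*}$, while $\gamma$ sends $\sum_i a_i(t)\xi_i\in A$ (with $a_i(t)\in{\bf k}[t]$) to the same tuple. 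The main point requiring care is setting up this identification rigorously from the informal description ``replacing $q$ by $\widehat{q}$'' in Remark~\ref{GEN}; once it is in place, the rest of the lemma reduces to coefficient comparison.

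For part~(1), the $\supseteq$ direction is immediate: for $f(t)=\sum_i a_i(t)\xi_i\in A$, the element $f(\widehat{q})=\sum_i a_i(\widehat{q})\xi_i\in A_{\widehat{q}}$ has image $\gamma(f(t))\in\gamma(A)$ under $\widehat{\ }$, so $f(\widehat{q})\in\widehat{A}$. For the $\subseteq$ direction I would take $h=\sum_i a_i(\widehat{q})\xi_i\in\widehat{A}$ and pull back the image of $h$ under $\widehat{\ }$ through $\gamma$ to some $g(t)=\sum_i b_i(t)\xi_i\in A$ with $b_i(t)\in{\bf k}[t]$. The ${\bf k}$-linear independence of $\{\xi_i\}$ in each $A_q$ then forces $a_i(q)=b_i(q)$ for every $q\in{\bf k}^*$. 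Since a nonzero element of ${\bf k}[\widehat{q},\widehat{q}^{-1}]$ has only finitely many roots in the infinite set ${\bf k}^*$, this yields $a_i(\widehat{q})=b_i(\widehat{q})\in{\bf k}[\widehat{q}]$, whence $h=g(\widehat{q})$ with $g(t)\in A$.

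Part~(2) then follows directly: by~(1) any element of $\widehat{A}$ is of the form $\sum_i a_i(\widehat{q})\xi_i$ with $a_i\in{\bf k}[\widehat{q}]$, and expanding each $a_i(\widehat{q})$ as a ${\bf k}$-linear combination of the monomials $\{\widehat{q}^j\}_{j\geq 0}$ shows that $\{\widehat{q}^j\xi_i:j\geq 0,\ i\in I\}$ spans $\widehat{A}$ over ${\bf k}$. Linear independence of this set is inherited from the ${\bf k}[\widehat{q},\widehat{q}^{-1}]$-basis property of $\{\xi_i\}_{i\in I}$ in $A_{\widehat{q}}$, since any ${\bf k}$-linear relation $\sum_{i,j}c_{i,j}\widehat{q}^j\xi_i=0$ can be rewritten as $\sum_i\bigl(\sum_j c_{i,j}\widehat{q}^j\bigr)\xi_i=0$ and then split one $i$ at a time.
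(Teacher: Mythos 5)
Your proposal is correct and follows essentially the same route as the paper: the $\supseteq$ inclusion via $\gamma(f(t))=\widehat{\ }(f(\widehat{q}))$, and coefficient comparison against $\{\xi_i\}$ together with the fact that a nonzero (Laurent) polynomial has only finitely many roots in the infinite set ${\bf k}^*$. You are somewhat more explicit than the paper on two points it leaves implicit — the $\subseteq$ direction of (1) and the formalization of $A_{\widehat{q}}$ as a free ${\bf k}[\widehat{q},\widehat{q}^{-1}]$-module on $\{\xi_i\}$ — but the substance of the argument is the same.
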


\begin{proof}
(1) For every $f(t)=\sum_{i\in I} a_i(t)\xi_i\in A$ ($a_i(t)\in{\bf k}[t]$), we have that
$\gamma(f(t))=\widehat{}\ (f(\widehat{q}))$ and thus the result follows.

(2) Since $A$ is a ${\bf k}[t]$-algebra,  $\widehat{A}$ is  spanned ${\bf k}$-linearly by elements $\widehat{q}^j\xi_i$ for  $j\geq0$ and $i\in I$ by (1).
Suppose that $\sum_{j,i}c_{ji}\widehat{q}^j\xi_i=0$ for some $c_{ji}\in{\bf k}$. Setting $a_i(t)=\sum_j c_{ji}t^j\in{\bf k}[t]$, we have that
$$0=\sum_{j,i}c_{ji}\widehat{q}^j\xi_i=\sum_i(\sum_j c_{ji}\widehat{q}^j)\xi_i=\sum_ia_i(\widehat{q})\xi_i.$$
Since  $\widehat{q}$ can take infinitely many elements in ${\bf k}^*$,  $a_i(t)$ has infinitely many zeros   and thus
$a_i(t)=0$. It follows that $c_{ji}=0$ and thus  $\{\widehat{q}^j\xi_i\ | j\geq0, i\in I\}$ is a ${\bf k}$-basis of $\widehat{A}$.
\end{proof}

Note by Lemma~\ref{SHAT}(1) that  $\widehat{A}$ is a ${\bf k}$-subalgebra of $A_{\widehat{q}}$ such that $\widehat{q}^{-1}\notin \widehat{A}$ since $t^{-1}\notin A$.
Denote by $\widehat{\Gamma}$  the composition
\begin{equation}\label{NATM}
\widehat{\Gamma}=\Gamma\circ \widehat{}\ :\widehat{A}\overset{\widehat{}}{\longrightarrow}\gamma(A)\overset{\Gamma}{\longrightarrow} \overline{A},\ \ f(\widehat{q})\mapsto f(0).
\end{equation}

\begin{lem}\label{SURJ}
For   $f(t)\in A$,
  $\widehat{\Gamma}(f(\widehat{q}))={f(0)}=\overline{f(t)}$.
\end{lem}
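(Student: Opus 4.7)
The plan is to unwind the definitions along the composition $\widehat{\Gamma} = \Gamma \circ \widehat{\ } = \overline{\gamma} \circ \gamma^{-1} \circ \widehat{\ }$, and verify that everything is well-defined at each step. This is essentially a diagram chase, so the main burden is bookkeeping rather than any real obstruction.

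First I would evaluate the map $\widehat{\ }$ on $f(\widehat{q}) \in \widehat{A}$. By the definition of the homomorphism $\widehat{\ }$, specializing $\widehat{q}$ at each $q \in \mathbf{k}^*$ gives
\[
\widehat{\ }(f(\widehat{q})) = \bigl( f(\widehat{q})|_{\widehat{q}=q}\bigr)_{q\in\mathbf{k}^*} = (f(q))_{q\in\mathbf{k}^*} = \gamma(f(t)).
\]
In particular, this shows directly that $\widehat{\ }(f(\widehat{q})) \in \gamma(A)$, which is consistent with Lemma~\ref{SHAT}(1) that identifies $\widehat{A}$ as precisely the elements of $A_{\widehat{q}}$ of the form $f(\widehat{q})$ for some $f(t) \in A$.

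Next I would apply $\gamma^{-1}$. Since $\gamma$ is injective by \cite[Lemma 1.2]{Oh12}, $\gamma^{-1}$ is well-defined on $\gamma(A)$, and clearly $\gamma^{-1}(\gamma(f(t))) = f(t)$. Finally, applying the canonical projection $\overline{\gamma}: A \to \overline{A} = A/tA$ yields $\overline{\gamma}(f(t)) = \overline{f(t)} = f(0)$ by the notation introduced in Assumption~\ref{ASSUM}(5). Chaining these together:
\[
\widehat{\Gamma}(f(\widehat{q})) = \overline{\gamma}\bigl(\gamma^{-1}(\gamma(f(t)))\bigr) = \overline{\gamma}(f(t)) = f(0) = \overline{f(t)}.
\]

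There is no substantive obstacle; the only point that warrants care is confirming that $\widehat{\ }(f(\widehat{q})) = \gamma(f(t))$ really identifies an element of $\gamma(A)$ (so that $\gamma^{-1}$ applies), and this is immediate from the explicit formula for $\widehat{\ }$. The proof is therefore just a one-line verification once the definitions are unpacked.
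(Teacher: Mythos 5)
Your proof is correct and follows the same route as the paper's (which simply notes that $f(\widehat{q})\in\widehat{A}$ and appeals to Assumption~\ref{ASSUM}(5)); you have just written out the diagram chase $\widehat{\ }(f(\widehat{q}))=\gamma(f(t))$, $\gamma^{-1}(\gamma(f(t)))=f(t)$, $\overline{\gamma}(f(t))=f(0)$ explicitly. The one point you rightly flag --- that $\widehat{\ }(f(\widehat{q}))$ lands in $\gamma(A)$ so that $\gamma^{-1}$ applies --- is exactly what the paper's citation of Lemma~\ref{SHAT} is covering.
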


\begin{proof}
Since $f(\widehat{q})\in \widehat{A}$ by Lemma~\ref{SHAT}, it  follows by Assumption~\ref{ASSUM}(5).
\end{proof}

\begin{prop}\label{RIHOMO}
The map (\ref{NATM}) is a ${\bf k}$-algebra  epimorphism such that
$$\widehat{\Gamma}(x_i)=x_i,\ \ \widehat{\Gamma}(\widehat{q})=0,\ \
 \widehat{\Gamma}(c)=c$$ for  $i=1,\ldots, n$ and each $c\in{\bf k}$.
\end{prop}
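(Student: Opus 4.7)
The plan is to assemble $\widehat{\Gamma}$ as a composition of three maps that have already been handled individually, verify surjectivity factor by factor, and then read off the values on the named elements from Lemma~\ref{SURJ}. Each piece is essentially a bookkeeping exercise given the preceding lemmas.

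First I would note that $\widehat{\Gamma}$ is a ${\bf k}$-algebra homomorphism because it is the composition of ${\bf k}$-algebra homomorphisms. The map $\widehat{}\ : A_{\widehat{q}} \to \prod_{q\in{\bf k}^*} A_q$ is a ${\bf k}$-algebra homomorphism by construction, so its restriction to the subalgebra $\widehat{A}$ is as well, and it lands in $\gamma(A)$ by the very definition of $\widehat{A}=\widehat{}\ ^{-1}(\gamma(A))$. The map $\gamma^{-1}:\gamma(A)\to A$ is well-defined because $\gamma$ is injective by \cite[Lemma~1.2]{Oh12}, and $\overline{\gamma}:A\to \overline{A}$ is the canonical projection, hence a ${\bf k}$-algebra homomorphism.

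Next I would establish surjectivity. The canonical projection $\overline{\gamma}$ is surjective, and $\gamma^{-1}$ is a bijection, so $\Gamma=\overline{\gamma}\circ\gamma^{-1}$ is surjective onto $\overline{A}$. For the first factor, the identity $\gamma(f(t))=\widehat{}\ (f(\widehat{q}))$ used already in the proof of Lemma~\ref{SHAT}(1), together with Lemma~\ref{SHAT}(1) itself which identifies $\widehat{A}=\{f(\widehat{q})\mid f(t)\in A\}$, shows that every element of $\gamma(A)$ is hit by some element of $\widehat{A}$ under $\widehat{}\ $. Hence $\widehat{\Gamma}$ is a composition of surjective ${\bf k}$-algebra homomorphisms and is therefore a ${\bf k}$-algebra epimorphism.

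Finally, the stated values follow by applying Lemma~\ref{SURJ} to well-chosen polynomials $f(t)\in A$. Taking $f(t)=x_i$ gives $f(\widehat{q})=x_i\in\widehat{A}$ and $f(0)=x_i\in\overline{A}$, hence $\widehat{\Gamma}(x_i)=x_i$. Taking $f(t)=t$ gives $f(\widehat{q})=\widehat{q}$ and $f(0)=0$, hence $\widehat{\Gamma}(\widehat{q})=0$. Taking $f(t)=c\in{\bf k}$ constant gives $\widehat{\Gamma}(c)=c$. The only mild subtlety throughout is that each symbol $x_i$ lives simultaneously in $A$, in $\widehat{A}\subseteq A_{\widehat{q}}$, and in $\overline{A}$, but the conventions set out in Assumption~\ref{ASSUM}(5) resolve this unambiguously, so no genuine obstacle arises.
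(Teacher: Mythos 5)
Your proof is correct and follows essentially the same route as the paper's: both establish that $\widehat{\Gamma}$ is a ${\bf k}$-algebra homomorphism as a composition of such maps, deduce surjectivity from the formula $\widehat{\Gamma}(f(\widehat{q}))=f(0)$ of Lemma~\ref{SURJ} (you merely unpack it factor by factor), and evaluate on $x_i$, $\widehat{q}$, and constants by choosing the obvious representatives $f(t)$. Your write-up is somewhat more explicit than the paper's, but there is no substantive difference.
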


\begin{proof}
Since \ $\widehat{}$ \   and $\Gamma$ are  ${\bf k}$-algebra homomorphisms,
$\widehat{\Gamma}$ is a ${\bf k}$-algebra homomorphism. It is clear that $\widehat{\Gamma}(\widehat{q})=0$,
 $\widehat{\Gamma}(c)=c$ for each $c\in{\bf k}$ and $\widehat{\Gamma}(x_i)=x_i$ for $i=1,\ldots, n$.
  The map $\widehat{\Gamma}$ is surjective  by Lemma~\ref{SURJ}.
\end{proof}

We will construct a homomorphism $\varphi$ from $\text{End}(\widehat{A})$  into $\text{P.End}(\overline{A})$.
For $\sigma\in\text{End}(\widehat{A})$, define a map
\begin{equation}\label{GROUPHOM}
\varphi(\sigma):\overline{A}\longrightarrow \overline{A},\ \ \varphi(\sigma)(z)=\widehat{\Gamma}\sigma(z'),
\end{equation}
 where $\widehat{\Gamma}(z')=z$.
Namely, $\varphi(\sigma)(z)=\widehat{\Gamma}\sigma{\widehat{\Gamma}}^{-1}(z)$.

\begin{lem}\label{GROUP}
For $\sigma\in\text{End}(\widehat{A})$, $\varphi(\sigma):\overline{A}\to \overline{A}$ is a Poisson homomorphism such that
$$\varphi(\sigma)(x_i)=\sigma(x_i)|_{\widehat{q}=0}$$ for all $i=1,\ldots,n$.
In particular, $\varphi(\text{id}_{\widehat{A}})=\text{id}_{\overline{A}}$.

$$\begin{picture}(300,100)(-50,-40) 
                                                            \put(80,50){$\sigma$}
                             \put(5,40){$\widehat{A}$}      \put(20,45){\vector(1,0){135}}   \put(160,40){$\widehat{A}$}
\put(0,0){$\widehat{\Gamma}$}\put(10,30){\vector(0,-1){50}}                                  \put(165,30){\vector(0,-1){50}}\put(170,0){$\widehat{\Gamma}$}
                             \put(5,-40){$\overline{A}$}    \put(20,-35){\vector(1,0){135}}  \put(160,-40){$\overline{A}$}
                                                            \put(80,-45){$\varphi(\sigma)$}

                             \put(27,20){$x_i$}             \put(45,22){\vector(1,0){65}}    \put(115,20){$\sigma(x_i)$}
                             \put(15,35){\line(1,-1){10}}                                    \put(160,35){\line(-2,-1){15}}
                             \put(32,-10){\vector(0,1){20}}                                  \put(130,10){\vector(0,-1){20}}
                             \put(15,-30){\line(1,1){10}}                                    \put(160,-30){\line(-2,1){10}}
                             \put(27,-20){$x_i$}            \put(45,-17){\vector(1,0){65}}   \put(115,-20){$\sigma(x_i)|_{\hat{q}=0}$}
\end{picture}$$
\bigskip
\end{lem}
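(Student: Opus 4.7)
The plan is to realize $\varphi(\sigma)$ as the homomorphism induced by $\sigma$ on the quotient, via the identification $\overline{A}\cong \widehat{A}/\widehat{q}\widehat{A}$, and then to verify that this induced map preserves the Poisson bracket that $\overline{A}$ inherits from the noncommutative structure of $\widehat{A}\subset A_{\widehat{q}}$. Throughout I will read an element of $\text{End}(\widehat{A})$ as a ${\bf k}[\widehat{q}]$-algebra endomorphism (i.e.\ fixing the central deformation parameter $\widehat{q}$), which is the natural convention in this quantization setting and is implicit in the formula $\varphi(\sigma)(x_i)=\sigma(x_i)|_{\widehat{q}=0}$.

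Step 1 (well-definedness and the algebra structure). Since $\widehat{\Gamma}(f(\widehat{q}))=f(0)$, the kernel of $\widehat{\Gamma}$ is exactly $\widehat{q}\widehat{A}$. Well-definedness of $\varphi(\sigma)(z)=\widehat{\Gamma}\sigma(z')$, independent of the chosen lift $z'$ of $z$, reduces to showing that $\sigma$ stabilizes the ideal $\widehat{q}\widehat{A}$; this is immediate because $\sigma(\widehat{q})=\widehat{q}$ implies $\sigma(\widehat{q}h)=\widehat{q}\sigma(h)\in \widehat{q}\widehat{A}$. The relation $\varphi(\sigma)\circ\widehat{\Gamma}=\widehat{\Gamma}\circ\sigma$ then holds, and surjectivity of $\widehat{\Gamma}$ (Proposition~\ref{RIHOMO}) transfers the ${\bf k}$-algebra-homomorphism property from $\widehat{\Gamma}\circ\sigma$ down to $\varphi(\sigma)$.

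Step 2 (Poisson compatibility). Given $\alpha,\beta\in\overline{A}$, pick lifts $a,b\in\widehat{A}$ under $\widehat{\Gamma}$. Commutativity of $\overline{A}$ forces $ab-ba\in\widehat{q}\widehat{A}$, so the element $c:=\widehat{q}^{-1}(ab-ba)$, formed inside $A_{\widehat{q}}$, lies back in $\widehat{A}$, and formula~(\ref{PBRACKET2}) gives $\{\alpha,\beta\}=\widehat{\Gamma}(c)$. Applying $\sigma$ to the identity $\widehat{q}c=ab-ba$ and using $\sigma(\widehat{q})=\widehat{q}$, I cancel $\widehat{q}$ as a central non-zero-divisor in $A_{\widehat{q}}$ to obtain
$$\sigma(c)=\widehat{q}^{-1}\bigl(\sigma(a)\sigma(b)-\sigma(b)\sigma(a)\bigr).$$
Applying $\widehat{\Gamma}$ and invoking~(\ref{PBRACKET2}) a second time (with $\sigma(a),\sigma(b)$ as lifts of $\varphi(\sigma)(\alpha),\varphi(\sigma)(\beta)$) then yields $\varphi(\sigma)(\{\alpha,\beta\})=\{\varphi(\sigma)(\alpha),\varphi(\sigma)(\beta)\}$.

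Step 3 (values on generators, and the identity). Since $\widehat{\Gamma}(x_i)=x_i$ by Proposition~\ref{RIHOMO}, the element $x_i\in\widehat{A}$ is a lift of $x_i\in\overline{A}$, so $\varphi(\sigma)(x_i)=\widehat{\Gamma}(\sigma(x_i))=\sigma(x_i)|_{\widehat{q}=0}$ by the description of $\widehat{\Gamma}$ in~(\ref{NATM}). Taking $\sigma=\text{id}_{\widehat{A}}$ gives $\varphi(\text{id}_{\widehat{A}})(x_i)=x_i$, and since $\overline{A}$ is generated by $x_1,\ldots,x_n$ over $\mathbf{k}$ while $\varphi(\text{id}_{\widehat{A}})$ is a ${\bf k}$-algebra map already shown to fix the scalars, this forces $\varphi(\text{id}_{\widehat{A}})=\text{id}_{\overline{A}}$. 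The main obstacle is Step 2, since the Poisson bracket is defined by a formal division by $\widehat{q}$ that exits $\widehat{A}$ into $A_{\widehat{q}}$; the argument must carry elements through this larger ring and exploit the centrality and non-zero-divisor nature of $\widehat{q}$ before returning to $\widehat{A}$ via $\widehat{\Gamma}$.
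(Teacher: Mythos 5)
Your proof is correct and follows essentially the same route as the paper's: well-definedness by showing that $\sigma$ preserves the kernel $\widehat{q}\widehat{A}$ of $\widehat{\Gamma}$, and Poisson compatibility by commuting $\sigma$ past the central factor $\widehat{q}$ in $\{a,b\}=\widehat{\Gamma}(\widehat{q}^{-1}(ab-ba))$. Your explicit standing assumption that $\sigma$ fixes $\widehat{q}$ is precisely the convention the paper uses implicitly (when it pulls $a_i(\widehat{q})$ and $\widehat{q}^{-1}$ through $\sigma$ in its computations), so the two arguments coincide in substance.
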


\begin{proof}
For any $z\in \overline{A}$, suppose that $f(0)=g(0)=z$ for some $f(t),g(t)\in A$.
Then  $f(\widehat{q}), g(\widehat{q})\in \widehat{A}$ and
$$\widehat{\Gamma}(f(\widehat{q}))=f(0)=z=g(0)=\widehat{\Gamma}( g(\widehat{q}))$$
 by Lemma~\ref{SURJ}. Note that  $f(\widehat{q})-g(\widehat{q})$ is expressed by $$f(\widehat{q})-g(\widehat{q})=\sum_{i}a_i(\widehat{q})\xi_i$$ for some  $a_i(t)\in{\bf k}[t]$ by Lemma~\ref{SHAT}. Since
$f(0)-g(0)=0$,  $a_i(0)=0$ for all $i\in I$. Hence
$$(\widehat{\Gamma}\sigma)(f(\widehat{q})-g(\widehat{q}))=(\sigma(f(\widehat{q})-g(\widehat{q})))|_{\widehat{q}=0}=\sum_{i\in I}a_i(0)(\sigma(\xi_i))|_{\widehat{q}=0}=0$$
since $a_i(0)=0$  for all $i$. Hence $(\widehat{\Gamma}\sigma)(f(\widehat{q}))=(\widehat{\Gamma}\sigma)(g(\widehat{q}))$
and thus $\varphi(\sigma)$ is well defined.

By Proposition~\ref{RIHOMO}, $\varphi(\sigma)$ is a ${\bf k}$-algebra homomorphism  and
$$\begin{aligned}
\varphi(\sigma)(x_i)=\widehat{\Gamma}(\sigma(x_i))=\sigma(x_i)|_{\widehat{q}=0}
\end{aligned}$$
for all $i=1,\ldots,n$.
Hence $\varphi(\text{id}_{\widehat{A}})=\text{id}_{\overline{A}}$.

For $a,b\in \overline{A}$, let $f(t),g(t)\in A$ such that $\overline{f(t)}=a, \overline{g(t)}=b$.
Then  $f(\widehat{q}), g(\widehat{q})\in \widehat{A}$ and
$$\widehat{\Gamma}(f(\widehat{q}))=a=\overline{f(t)},\ \ \widehat{\Gamma}(g(\widehat{q}))=b=\overline{g(t)}$$
by Lemma~\ref{SURJ}. Thus
$$\overline{t^{-1}(f(t)g(t)-g(t)f(t))}
={\widehat{\Gamma}}(\widehat{q}^{-1}(f(\widehat{q})g(\widehat{q})-g(\widehat{q})f(\widehat{q}))).$$
Therefore
$$\begin{aligned}
\varphi(\sigma)(\{a,b\})&=\widehat{\Gamma}\sigma(\widehat{q}^{-1}(f(\widehat{q})g(\widehat{q})-g(\widehat{q})f(\widehat{q})))\\
&=\left(\widehat{q}^{-1}(\sigma(f(\widehat{q}))\sigma(g(\widehat{q}))-\sigma(g(\widehat{q}))\sigma(f(\widehat{q})))\right)_{\widehat{q}=0}\\
&=\overline{t^{-1}(\sigma(f(\widehat{q}))|_{\widehat{q}=t}\sigma(g(\widehat{q}))|_{\widehat{q}=t}-\sigma(g(\widehat{q}))|_{\widehat{q}=t}\sigma(f(\widehat{q}))|_{\widehat{q}=t})}\\
&=\{\varphi(\sigma)(a), \varphi(\sigma)(b)\}
\end{aligned}$$
by Lemma~\ref{SURJ}
and thus $\varphi(\sigma)$ is a Poisson homomorphism.
\end{proof}

\begin{thm}\label{GRHOM}
The map
$$\varphi: \text{End}(\widehat{A})\to\text{P.End}(\overline{A}),\ \ \sigma\mapsto\varphi(\sigma)$$
is a semigroup homomorphism such that its restriction
$$\varphi|_{\text{Aut}(\widehat{A})}: \text{Aut}(\widehat{A})\to\text{P.Aut}(\overline{A})$$
is a group homomorphism.
\end{thm}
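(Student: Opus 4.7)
The plan is to establish the semigroup homomorphism property $\varphi(\sigma\tau)=\varphi(\sigma)\varphi(\tau)$ first, and then derive the group homomorphism statement as a formal consequence, using Lemma~\ref{GROUP}, which already handles the Poisson part and the identity.

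For multiplicativity, fix $\sigma,\tau\in\text{End}(\widehat{A})$ and $z\in\overline{A}$. By Proposition~\ref{RIHOMO} the map $\widehat{\Gamma}$ is surjective, so I choose some $z'\in\widehat{A}$ with $\widehat{\Gamma}(z')=z$. Set $w:=\varphi(\tau)(z)=\widehat{\Gamma}(\tau(z'))$. Since $\tau(z')\in\widehat{A}$ satisfies $\widehat{\Gamma}(\tau(z'))=w$, it is a legitimate $\widehat{\Gamma}$-preimage of $w$, so by the defining formula (\ref{GROUPHOM}) together with the well-definedness established in Lemma~\ref{GROUP}, I compute
\[
\varphi(\sigma)(w)=\widehat{\Gamma}(\sigma(\tau(z')))=\widehat{\Gamma}(\sigma\tau(z')).
\]
But $\widehat{\Gamma}(\sigma\tau(z'))$ is also $\varphi(\sigma\tau)(z)$ directly from (\ref{GROUPHOM}), so $\varphi(\sigma\tau)(z)=\varphi(\sigma)\varphi(\tau)(z)$. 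Since $z$ was arbitrary, $\varphi(\sigma\tau)=\varphi(\sigma)\varphi(\tau)$. Combined with $\varphi(\text{id}_{\widehat{A}})=\text{id}_{\overline{A}}$ from Lemma~\ref{GROUP} and the fact that each $\varphi(\sigma)\in\text{P.End}(\overline{A})$, this makes $\varphi$ a semigroup homomorphism.

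For the restriction, suppose $\sigma\in\text{Aut}(\widehat{A})$ with inverse $\sigma^{-1}\in\text{Aut}(\widehat{A})\subseteq\text{End}(\widehat{A})$. Applying multiplicativity yields
\[
\varphi(\sigma)\varphi(\sigma^{-1})=\varphi(\sigma\sigma^{-1})=\varphi(\text{id}_{\widehat{A}})=\text{id}_{\overline{A}},
\]
and similarly $\varphi(\sigma^{-1})\varphi(\sigma)=\text{id}_{\overline{A}}$. Hence $\varphi(\sigma)$ is invertible as a Poisson endomorphism with $\varphi(\sigma)^{-1}=\varphi(\sigma^{-1})$, so $\varphi(\sigma)\in\text{P.Aut}(\overline{A})$, and $\varphi|_{\text{Aut}(\widehat{A})}$ is a group homomorphism. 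The only point requiring genuine care is using $\tau(z')$ as a preimage of $w=\varphi(\tau)(z)$ in the multiplicativity step; this is legitimate precisely because Lemma~\ref{GROUP} ensures that $\varphi(\sigma)(w)$ is independent of the $\widehat{\Gamma}$-preimage chosen. Once that is observed the rest of the argument is purely formal.
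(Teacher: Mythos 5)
Your proof is correct and follows essentially the same route as the paper's: multiplicativity via $\varphi(\sigma\tau)=\widehat{\Gamma}(\sigma\tau)\widehat{\Gamma}^{-1}=(\widehat{\Gamma}\sigma\widehat{\Gamma}^{-1})(\widehat{\Gamma}\tau\widehat{\Gamma}^{-1})$, then the automorphism statement from $\varphi(\sigma)\varphi(\sigma^{-1})=\varphi(\text{id}_{\widehat{A}})=\text{id}_{\overline{A}}$. You are merely more explicit than the paper in justifying the formal cancellation of $\widehat{\Gamma}^{-1}\widehat{\Gamma}$ by appealing to the well-definedness established in Lemma~\ref{GROUP}, which is exactly the right point to flag.
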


\begin{proof}
If $\sigma, \tau\in\text{End}(\widehat{A})$ then
$$\varphi(\sigma\tau)=\widehat{\Gamma}(\sigma\tau){\widehat{\Gamma}}^{-1}
=(\widehat{\Gamma}\sigma{\widehat{\Gamma}}^{-1})({\widehat{\Gamma}}\tau{\widehat{\Gamma}}^{-1})
=\varphi(\sigma)\varphi(\tau)$$
and thus $\varphi$ is a semigroup homomorphism by Lemma~\ref{GROUP}.

Let $\sigma\in\text{Aut}(\widehat{A})$. Then $\sigma^{-1}\in\text{Aut}(\widehat{A})$ and thus
$$\begin{aligned}
\varphi(\sigma)\varphi(\sigma^{-1})&=\varphi(\sigma\sigma^{-1})=\varphi(\text{id}_{\widehat{A}})
=\text{id}_{\overline{A}},\\
\varphi(\sigma^{-1})\varphi(\sigma)&=\varphi(\sigma^{-1}\sigma)=\varphi(\text{id}_{\widehat{A}})
=\text{id}_{\overline{A}}.
\end{aligned}$$
Hence $\varphi(\sigma)$ is a Poisson automorphism by Lemma~\ref{GROUP}. It follows that
$\varphi|_{\text{Aut}(\widehat{A})}$ is a group homomorphism from $\text{Aut}(\widehat{A})$ into
$\text{P.Aut}(\overline{A})$.
\end{proof}


\section{Quantization of Poisson Weyl algebras}

Here we obtain a quantization  of Poisson Weyl algebra.

\begin{defn}
The  {\it $n$-th Poisson Weyl algebra} is
 the ${\bf k}$-algebra $B_n={\bf k}[x_1, \ldots, x_{2n-1},x_{2n}]$ equipped with the Poisson bracket
$$\{f,g\}=\sum_{i=1}^n\left(-\frac{\partial f}{\partial x_{2i-1}}\frac{\partial g}{\partial x_{2i}}+\frac{\partial g}{\partial x_{2i-1}}\frac{\partial f}{\partial x_{2i}}\right)$$
for all $f,g\in B_n$.
(See \cite[1.1.A]{ChPr} and \cite[1.3]{MeHaO}.)
\end{defn}

Note that $B_n$ is Poisson simple, namely there is no nontrivial Poisson ideal in $B_n$,
and that   $B_n$ is a Poisson algebra
with the Poisson bracket: for $j>i$,
$$\{x_j,x_i\}=\left\{\begin{aligned} &1,&&\text{if }j=2\ell,i=2\ell-1,\\ &0,&&\text{otherwise}.\end{aligned}\right.$$
Thus $B_n$ is an iterated Poisson polynomial algebra
$$B_n={\bf k}[x_1][x_2;\delta_2]_p[x_3]_p[x_4;\delta_4]_p\ldots[x_{2n-1}]_p[x_{2n};\delta_{2n}]_p,$$ where
$$\delta_{2j}(x_i)=\left\{\begin{aligned} &1,& &\text{if }i=2j-1,\\ &0,&&\text{if }i\neq 2j-1.\end{aligned}\right.$$

\begin{prop}\label{PWA}
Let  $A_n$ be an ${\bf k}[t]$-algebra generated by $x_1, \ldots, x_{2n-1},x_{2n}$ subject to the relations, for $j>i$
\begin{equation}\label{YR1}
x_jx_i=\left\{\begin{aligned}&x_ix_j+t,&&\text{if }j=2\ell,i=2\ell-1\\&x_ix_j,&&\text{otherwise}.\end{aligned}\right.
\end{equation}
Then $t$ is a regular element in $A_n$ and the semiclassical limit $\overline{A_n}:=A_n/tA_n$ is Poisson isomorphic to the $n$-th Poisson Weyl algebra $B_n$. (We will identify $\overline{A_n}$ to $B_n$.)
In particular, $A_n$ is an iterated skew polynomial ${\bf k}[t]$-algebra
$$A_n={\bf k}[t][x_1][x_2; \nu_2]\ldots[x_{2n-1}][x_{2n};\nu_{2n}],$$
where ${\bf k}[t]$-linear map $\nu_{2j}$ is a derivation defined by
$$ \nu_{2j}(x_i)=\left\{\begin{aligned}&t,&&\text{if } i=2j-1,\\ &0,&&\text{otherwise.}\end{aligned}\right.$$
Namely, $\nu_{2j}=t\frac{\partial}{\partial x_{2j-1}}$.
\end{prop}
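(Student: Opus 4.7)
The plan is to prove all claims simultaneously by constructing the iterated skew polynomial ${\bf k}[t]$-algebra
$$R_n := {\bf k}[t][x_1][x_2;\nu_2][x_3]\cdots[x_{2n-1}][x_{2n};\nu_{2n}]$$
with the $\nu_{2j}$ defined as in the statement (and the untwisted extensions $[x_{2j-1}]$ meaning $\beta=\text{id}$, $\nu=0$, so that $x_{2j-1}$ is adjoined as a commuting variable), and then identifying $R_n$ with $A_n$. Once this identification is in place, the regularity of $t$ and the Poisson isomorphism $\overline{A_n}\cong B_n$ both fall out from the free ${\bf k}[t]$-module structure of $R_n$ on the standard monomial basis.

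I would build $R_n$ by induction on $j$. The critical point arises at the step where $x_{2j}$ is adjoined: I must check that the prescribed ${\bf k}[t]$-linear map $\nu_{2j}$, sending $x_{2j-1}\mapsto t$ and $x_i\mapsto 0$ for $i\neq 2j-1$, really extends to a derivation of the ring $R_{j-1}[x_{2j-1}]$ produced at the previous step. Applying $\nu_{2j}$ to each defining relation $x_kx_i-x_ix_k-t\delta_{k,i}$ (with $k>i$, $k,i\leq 2j-1$, and $\delta_{k,i}=1$ iff $(k,i)=(2\ell,2\ell-1)$ for some $\ell<j$), the image vanishes by the Leibniz rule unless $k=2j-1$ and $i<2j-1$; in that remaining case it equals $tx_i-x_it=0$ by centrality of $t$. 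So the iterated construction goes through and produces $R_n$ as a free ${\bf k}[t]$-module with basis the standard monomials $x_1^{a_1}\cdots x_{2n}^{a_{2n}}$.

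Having $R_n$ in hand, the relations (\ref{YR1}) hold in $R_n$ by construction (for instance, $x_{2\ell}x_{2\ell-1}=x_{2\ell-1}x_{2\ell}+\nu_{2\ell}(x_{2\ell-1})=x_{2\ell-1}x_{2\ell}+t$), so the universal property of $A_n$ gives a surjective ${\bf k}[t]$-algebra map $\pi:A_n\to R_n$ sending $x_i\mapsto x_i$. Conversely, the relations (\ref{YR1}) allow me to rewrite every word in the generators as a ${\bf k}[t]$-linear combination of standard monomials, so the standard monomials span $A_n$; their images under $\pi$ are ${\bf k}[t]$-linearly independent in $R_n$, so $\pi$ is injective and therefore an isomorphism. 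In particular $A_n$ inherits the free ${\bf k}[t]$-module structure of $R_n$, whence $t$ is central, nonzero, a nonunit and a non-zero-divisor; reducing (\ref{YR1}) mod $t$ also shows $A_n/tA_n$ is commutative, so $t$ is a regular element in the sense of the introduction.

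Finally, the Poisson bracket on $\overline{A_n}$ defined by (\ref{PBRACKET2}) yields $\{\overline{x_{2\ell}},\overline{x_{2\ell-1}}\}=\overline{t^{-1}(x_{2\ell}x_{2\ell-1}-x_{2\ell-1}x_{2\ell})}=1$ and $\{\overline{x_j},\overline{x_i}\}=0$ for all other $j>i$, matching the bracket of $B_n$ exactly; since $\overline{A_n}$ and $B_n$ coincide with ${\bf k}[x_1,\ldots,x_{2n}]$ as commutative ${\bf k}$-algebras, the identity on generators is the desired Poisson isomorphism. The only genuine technical obstacle is the inductive check in the second paragraph that each $\nu_{2j}$ is a well-defined ${\bf k}[t]$-linear derivation on the preceding layer of the iterated construction; every other assertion in the proposition is then a routine consequence of the standard-monomial ${\bf k}[t]$-basis of $R_n$.
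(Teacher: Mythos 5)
Your proposal is correct and follows essentially the same route as the paper: exhibit $A_n$ as the iterated skew polynomial algebra ${\bf k}[t][x_1][x_2;\nu_2]\cdots[x_{2n};\nu_{2n}]$, deduce regularity of $t$ from the resulting standard-monomial ${\bf k}[t]$-basis (the paper phrases this via $A_n$ being a domain), and match the induced bracket $\{\overline{x_{2\ell}},\overline{x_{2\ell-1}}\}=1$, all others zero, with that of $B_n$. You merely spell out in full the identification of the presented algebra with the Ore extension, which the paper compresses into ``it is observed that.''
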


\begin{proof}
By (\ref{YR1}), it is observed that
$A_n$ is   an iterated skew polynomial ${\bf k}[t]$-algebra
$$A_n={\bf k}[t][x_1][x_2; \nu_2]\ldots[x_{2n-1}][x_{2n};\nu_{2n}],$$
where
$$ \nu_{2j}(x_i)=\left\{\begin{aligned}&t,&&\text{if } i=2j-1,\\ &0,&&\text{otherwise.}\end{aligned}\right.$$
Hence $A_n$ is a domain and thus  the central element $t\in A_n$ is a nonzero, nonunit and non-zero-divisor such that
$A_n/tA_n$ is commutative by (\ref{YR1}).
It follows that   $t$ is a regular element of $A_n$ and   the semiclassical limit $\overline{A_n}=A_n/tA_n$ is a Poisson algebra with Poisson bracket
$$\{\overline{x_j},\overline{x_i}\}=\overline{t^{-1}(x_jx_i-x_ix_j)}=
 \left\{\begin{aligned} &1,&&\text{if }j=2\ell,i=2\ell-1,\\ &0,&&\text{otherwise}.\end{aligned}\right.$$
 Thus $\overline{A_n}$ is Poisson isomorphic to $B_n$.
\end{proof}

\begin{defn}
The  {\it $n$-th Weyl algebra} $W_n$ is a  ${\bf k}$-algebra generated by $x_1, \ldots, x_{2n-1},x_{2n}$ subject to the relations, for $j>i$
$$x_jx_i-x_ix_j=\left\{\begin{aligned}&1 , &&\text{if }j=2\ell, i=2\ell-1,\\ &0,&&\text{otherwise.}\end{aligned}\right.$$
\end{defn}

\begin{prop}\label{PWA2}
Retain the notations in Proposition~\ref{PWA}.

(1) For each $q\in {\bf k}^*$,  let $A_q:=A_n/(t-q)A_n$. Then $A_q$ is a ${\bf k}$-algebra
generated by
$x_1,\ldots, x_{2n-1},x_{2n}$ subject to the relations
$$x_jx_i-x_ix_j=\left\{\begin{aligned}&q , &&\text{if }j=2\ell, i=2\ell-1,\\ &0,&&\text{otherwise,}\end{aligned}\right.$$
which is an iterated skew polynomial ${\bf k}$-algebra
$$A_q={\bf k}[x_1][x_2; \nu_2']\ldots[x_{2n-1}][x_{2n};\nu_{2n}'],$$
where ${\bf k}$-linear map  $\nu'_{2j}$ is a derivation defined by
$$ \nu'_{2j}(x_i)=\left\{\begin{aligned}&q,&&\text{if } i=2j-1,\\ &0,&&\text{otherwise.}\end{aligned}\right.$$
Namely, $\nu'_{2j}=q\frac{\partial}{\partial x_{2j-1}}$.

(2) For each $q\in {\bf k}^*$, $A_q\cong W_n$. In particular, if $q=1$ then $A_q=W_n$.

(3) The set of all standard monomials in $x_1, x_2,\ldots, x_{2n}$ including the unity $1$ is a ${\bf k}[t]$-basis of
$A_n$ and a ${\bf k}$-basis of $A_q, W_n, B_n$ for each $q\in {\bf k}^*$.
\end{prop}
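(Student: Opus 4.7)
The plan is to exploit the iterated skew polynomial description of $A_n$ established in Proposition~\ref{PWA} and pass it through the specialization $t\mapsto q$, then exhibit a simple rescaling isomorphism with the Weyl algebra.

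For (1), I would reduce the iterated presentation $A_n={\bf k}[t][x_1][x_2;\nu_2]\cdots[x_{2n-1}][x_{2n};\nu_{2n}]$ modulo $(t-q)A_n$. Under this specialization the central parameter $t$ is sent to $q$, and each derivation $\nu_{2j}=t\,\partial/\partial x_{2j-1}$ descends to the ${\bf k}$-linear derivation $\nu'_{2j}=q\,\partial/\partial x_{2j-1}$. The commutation relations displayed in (\ref{YR1}) then reduce directly to the claimed ones with $t$ replaced by $q$, and the iterated skew polynomial tower survives the specialization because each derivation and its associated extension are manifestly well defined over ${\bf k}$.

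For (2), I would define $\phi:W_n\to A_q$ on generators by $\phi(x_{2\ell-1})=x_{2\ell-1}$ and $\phi(x_{2\ell})=q^{-1}x_{2\ell}$, and check that $\phi$ respects the defining relations of $W_n$: diagonal rescaling leaves the commuting pairs unaffected, while $\phi(x_{2\ell})\phi(x_{2\ell-1})-\phi(x_{2\ell-1})\phi(x_{2\ell})=q^{-1}(x_{2\ell}x_{2\ell-1}-x_{2\ell-1}x_{2\ell})=q^{-1}\cdot q=1$. The candidate inverse $\psi:A_q\to W_n$ given by $x_{2\ell-1}\mapsto x_{2\ell-1}$ and $x_{2\ell}\mapsto q\,x_{2\ell}$ is verified to be a homomorphism in the same way, and the composites agree with the identity on generators, so $\phi$ is an isomorphism. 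When $q=1$, $\phi$ is the identity on generators, so $A_1=W_n$ on the nose.

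For (3), the iterated skew polynomial description of $A_n$ in Proposition~\ref{PWA} together with the standard fact that a skew polynomial extension $R[z;\beta,\nu]$ is a free left $R$-module on $\{z^i\}_{i\geq0}$ (\cite[\S2]{GoWa2}) yields, by induction on the length of the tower, that the standard monomials in $x_1,\ldots,x_{2n}$ form a ${\bf k}[t]$-basis of $A_n$. The same argument applied to the iterated description from (1) gives a ${\bf k}$-basis of $A_q$, and via the isomorphism of (2) a ${\bf k}$-basis of $W_n$; for $B_n={\bf k}[x_1,\ldots,x_{2n}]$ the statement is immediate. I do not anticipate any real obstacle: the only point that needs care is checking that the diagonal rescaling in (2) preserves the commuting pairs as well as the nontrivial Weyl relations, which it does automatically. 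Everything else is a routine consequence of Proposition~\ref{PWA}.
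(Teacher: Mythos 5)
Your proposal is correct and follows essentially the same route as the paper: specialize the iterated skew polynomial presentation of $A_n$ at $t=q$, exhibit the diagonal rescaling $x_{2\ell}\mapsto q^{\pm1}x_{2\ell}$ as the isomorphism with $W_n$, and invoke the freeness of skew polynomial extensions for the basis claim. The only difference is that you write out the rescaling map and its inverse in both directions, which the paper leaves implicit.
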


\begin{proof}
(1) It follows immediately by (\ref{YR1}) and Proposition~\ref{PWA} by replacing $t$ and ${\bf k}[t]$ by $q$ and ${\bf k}$, respectively.

(2) The map from $A_q$ into $W_n$ defined by
$$
x_i\mapsto \left\{\begin{aligned}&x_i , &&\text{if } i=2\ell-1,\\ &qx_i,&&\text{if }i=2\ell\end{aligned}\right.
$$
is a ${\bf k}$-algebra isomorphism since $q$ is nonzero. Hence $A_q\cong W_n$.

(3) Since $A_n$, $A_q$ and $W_n$ are iterated skew polynomial algebras, the set of all standard monomials including the unity $1$ is their bases by \cite[\S2]{GoWa2}. Since $B_n$ is the commutative polynomial ${\bf k}$-algebra ${\bf k}[x_1, x_2,\ldots, x_{2n}]$,
the set of all standard monomials including the unity $1$ is a ${\bf k}$-basis of $B_n$ clearly.
\end{proof}


\section{Application to Weyl algebra}

Henceforth, retain the notations of \S3.
By  Proposition~\ref{PWA2}, the set $\{\xi_i | i\in I\}$ of all standard monomials including the unity $1$ is a ${\bf k}[t]$-basis of $A_n$ and ${\bf k}$-bases of $A_q$ and $B_n$. Hence $A_n, B_n$ and $A_q$ $(q\in{\bf k}^*)$ satisfy (1)-(5) of Assumption~\ref{ASSUM}.
Since $q$ works   as  a parameter taking values in ${\bf k}^*$, we get a ${\bf k}$-algebra $A_{\widehat{q}}$ from $A_q$ by replacing $q$  in $A_q$ by $\widehat{q}$. Denote by
$\widehat{A_n}$ the ${\bf k}$-subalgebra \ $\widehat{}\ ^{-1}(\gamma(A_n))$ of $A_{\widehat{q}}$ as in \S2.

By the proof of Proposition~\ref{PWA2}(2), the map
defined by
\begin{equation}\label{OH1}
\psi:A_{\widehat{q}}\to W_n,\ \ \ \psi(x_i)= \left\{\begin{aligned}&x_i , &&\text{if } i=2\ell-1,\\ &\widehat{q}x_i,&&\text{if }i=2\ell\end{aligned}\right.
\end{equation}
is an isomorphism since $\widehat{q}$ is invertible.

\begin{lem}\label{OHH}
Let $\sigma\in\text{End}(W_n)$. Then, for any $f\in \widehat{A_n}$, there exists a
positive integer $k(f)$ such that $(\widehat{q}+1)^{k(f)}(\psi^{-1}\sigma\psi)(f)\in (\widehat{q}+1)\widehat{A_n}$.
In particular, if $\sigma(x_i)=\sum_{j\in I}a_j\xi_j$ for some $a_j\in{\bf k}$ then
there exits a positive integer $k(x_i)$ such that
\begin{equation}\label{OOH}
(\widehat{q}+1)^{k(x_i)}(\psi^{-1}\sigma\psi)(x_i)=\sum_{j\in I} a_j(\widehat{q}+1)^{\ell_j}\xi_j,
\end{equation}
where all $\ell_j$ are positive integers.
\end{lem}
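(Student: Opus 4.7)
The plan is to first establish the explicit formula (\ref{OOH}) for the generator case $f = x_i$ by unpacking the action of $\psi^{-1}\sigma\psi$ on $x_i$ as a sum indexed by the standard monomials appearing in $\sigma(x_i)$, and then to extend to arbitrary $f \in \widehat{A_n}$ by multiplicativity of $\psi^{-1}\sigma\psi$ together with Lemma~\ref{SHAT}(2).

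For the generator case, write $\epsilon_i \in \{0,1\}$ for the parity indicator ($\epsilon_i = 0$ if $i$ is odd and $\epsilon_i = 1$ if $i$ is even). A direct unfolding of (\ref{OH1}) shows that, for any standard monomial $\xi_j = x_{i_1}\cdots x_{i_k}$, the application of $\psi^{-1}$ introduces a factor $(\widehat{q}+1)^{-s(\xi_j)}$, where $s(\xi_j) := \sum_t \epsilon_{i_t}$ records the number of even-indexed letters of $\xi_j$ counted with multiplicity. Writing $\sigma(x_i) = \sum_j a_j \xi_j$ (a finite sum since $\sigma(x_i) \in W_n$), one then computes
\[
(\psi^{-1}\sigma\psi)(x_i) = \sum_j a_j (\widehat{q}+1)^{\epsilon_i - s(\xi_j)} \xi_j.
\]
Because the sum is finite, the exponents $\epsilon_i - s(\xi_j)$ are bounded below. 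Choosing $k(x_i) := 1 - \epsilon_i + \max\{s(\xi_j) : a_j \neq 0\}$, which is a positive integer (the case $\sigma(x_i) = 0$ being trivial), and multiplying both sides by $(\widehat{q}+1)^{k(x_i)}$, one recovers exactly (\ref{OOH}) with $\ell_j := k(x_i) + \epsilon_i - s(\xi_j) \geq 1$.

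For the general statement, by Lemma~\ref{SHAT}(2) every $f \in \widehat{A_n}$ is a finite ${\bf k}$-linear combination of basis monomials $\widehat{q}^j \xi_i$ ($j \geq 0$, $i \in I$). Since $\psi^{-1}\sigma\psi$ is a ${\bf k}$-algebra homomorphism that fixes $\widehat{q}$, I expand $(\psi^{-1}\sigma\psi)(\widehat{q}^j \xi_i) = \widehat{q}^j \prod_t (\psi^{-1}\sigma\psi)(x_{i_t})$ and apply the generator step to each factor $x_{i_t}$; the product then lies in $(\widehat{q}+1)\widehat{A_n}$ once the combined exponent of $(\widehat{q}+1)$ is large enough to place every resulting summand there. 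A single $k(f)$ is obtained by taking the maximum over the finitely many basis monomials appearing in the expansion of $f$. The main technical obstacle is the careful exponent bookkeeping in the generator computation; once the identity $\psi^{-1}(\xi_j) = (\widehat{q}+1)^{-s(\xi_j)} \xi_j$ is firmly in place, the extension is a routine consequence of multiplicativity and finiteness.
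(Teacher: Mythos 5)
There is a genuine error at the heart of your generator computation: the identity $\psi^{-1}(\xi_j)=(\widehat{q}+1)^{-s(\xi_j)}\xi_j$ is false. From (\ref{OH1}) we have $\psi(x_{2\ell})=\widehat{q}\,x_{2\ell}$, hence $\psi^{-1}(x_{2\ell})=\widehat{q}^{\,-1}x_{2\ell}$ and $\psi^{-1}(\xi_j)=\widehat{q}^{\,-s(\xi_j)}\xi_j$; the quantity $\widehat{q}+1$ does not occur in the definition of $\psi$ at all. Carrying out your (otherwise correct) bookkeeping with the right base gives $(\psi^{-1}\sigma\psi)(x_i)=\sum_j a_j\,\widehat{q}^{\,\epsilon_i-s(\xi_j)}\xi_j$, which, after multiplying by a sufficiently large power of $\widehat{q}$, is exactly the paper's intermediate identity (\ref{OH2}) --- an identity in powers of $\widehat{q}$, not of $\widehat{q}+1$. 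Multiplying instead by $(\widehat{q}+1)^{k}$ does not clear the negative powers of $\widehat{q}$ and does not even produce an element of $\widehat{A_n}$: for example $(\widehat{q}+1)^{k}\widehat{q}^{\,-1}x_2\notin\widehat{A_n}$, since $(t+1)^{k}t^{-1}\notin{\bf k}[t]$ and $\widehat{q}^{\,-1}\notin\widehat{A_n}$.

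The distinction is not cosmetic, because $\widehat{\Gamma}(\widehat{q})=0$ while $\widehat{\Gamma}(\widehat{q}+1)=1$: in Theorem~\ref{WCASE} one must multiply $f$ by an element of $\widehat{A_n}$ that $\widehat{\Gamma}$ sends to $1$, which is precisely why the lemma is stated with $\widehat{q}+1$ rather than $\widehat{q}$. The step your proposal silently bypasses is the paper's passage from (\ref{OH2}) to (\ref{OOH}), namely the substitution of $\widehat{q}+1$ for $\widehat{q}$, justified there by the fact that $\widehat{q}$ is a parameter ranging over ${\bf k}^*$ and that (\ref{OH2}) holds for every value of that parameter. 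That substitution (however one chooses to make it rigorous) is the actual content of the lemma; what you have proved, once the computation is corrected, is only the $\widehat{q}$-version of (\ref{OOH}). Your reduction of the general case to the generator case via multiplicativity of $\psi^{-1}\sigma\psi$ and Lemma~\ref{SHAT}(2) is fine, but only after the generator case is established in the $(\widehat{q}+1)$-form.
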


\begin{proof}
Note that $\psi^{-1}\sigma\psi$ is an endomorphism of $A_{\widehat{q}}$ by (\ref{OH1}). Hence it is enough to prove (\ref{OOH}).
By (\ref{OH1}), we have that
$$\begin{aligned}
(\psi^{-1}\sigma\psi)(x_i)&=\left\{\begin{aligned}&\psi^{-1}\sigma(x_i)=\sum_{j\in I}a_j\psi^{-1}(\xi_j) , &&\text{if } i=2\ell-1,\\ &\psi^{-1}\sigma(\widehat{q}x_i)=\sum_{j\in I}a_j\widehat{q}\psi^{-1}(\xi_j),&&\text{if }i=2\ell\end{aligned}\right.\\
&=\sum_{j\in I}a_j \widehat{q}^{m_j}\xi_j\end{aligned}
$$
 for some integers $m_j$ since
$\psi^{-1}(\xi_j)=\widehat{q}^m\xi_j$ for some non-positive integer $m$ by (\ref{OH1}). Multiplying $\widehat{q}^k$ in the above equation
for a sufficiently large positive integer $k$,   we get
\begin{equation}\label{OH2}
\widehat{q}^k(\psi^{-1}\sigma\psi)(x_i)=\sum_{j\in I} a_j\widehat{q}^{\ell_j}\xi_j,
\end{equation}
where all $\ell_j$ are positive integers.

If $\widehat{q}=-1$ then (\ref{OOH}) holds since the both sides  are zero.
 If $\widehat{q}\neq-1$ then $\widehat{q}+1$ is nonzero. Since $\widehat{q}$ is the parameter taking values in ${\bf k}^*$, we may replace $\widehat{q}$ in (\ref{OH2}) by $\widehat{q}+1$, and thus (\ref{OOH}) holds.
\end{proof}

\begin{thm}\label{WCASE}
There exists a semigroup monomorphism $\Psi:\text{End}(W_n)\to \text{P.End}(B_n)$ such that its restriction
$\Psi |_{\text{Aut}(W_n)}:\text{Aut}(W_n)\to \text{P.Aut}(B_n)$ is a group monomorphism.
\end{thm}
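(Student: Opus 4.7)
The overall plan is to factor $\Psi$ through $\text{End}(\widehat{A_n})$ and appeal to Theorem~\ref{GRHOM}. Concretely, I will construct a semigroup monomorphism $\rho:\text{End}(W_n)\to\text{End}(\widehat{A_n})$ that carries $\text{Aut}(W_n)$ into $\text{Aut}(\widehat{A_n})$, and then define $\Psi:=\varphi\circ\rho$, where $\varphi$ is the map from Theorem~\ref{GRHOM}. Since Theorem~\ref{GRHOM} already delivers the Poisson target and the semigroup/group homomorphism structure on the composite, the essential work is to build $\rho$ and verify its injectivity.

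The construction of $\rho$ begins by transporting $\sigma\in\text{End}(W_n)$ via the isomorphism $\psi$ of (\ref{OH1}) to $\tilde\sigma:=\psi^{-1}\sigma\psi\in\text{End}(A_{\widehat{q}})$. The obstruction is that $\tilde\sigma(x_i)$ typically carries negative powers of $\widehat{q}$ and hence does not restrict to $\widehat{A_n}$. Lemma~\ref{OHH} is designed to handle exactly this: passing through the $\widehat{q}\mapsto\widehat{q}+1$ substitution and renormalizing, the identity (\ref{OOH}) exhibits $(\widehat{q}+1)^{k(x_i)}\tilde\sigma(x_i)$ as an element of $(\widehat{q}+1)\widehat{A_n}$ whose value at $\widehat{q}=0$ recovers the standard-monomial expansion of $\sigma(x_i)$. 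Using these normalized expressions, I would define $\rho(\sigma)$ on the generators $x_i$ and then, as the key computational step, verify that the defining relations of $\widehat{A_n}$ are respected after the shift and normalization; once this is in hand, functoriality $\rho(\sigma\tau)=\rho(\sigma)\rho(\tau)$ follows because the whole procedure is natural in $\sigma$, and $\Psi=\varphi\circ\rho$ inherits the semigroup homomorphism property from Theorem~\ref{GRHOM}.

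For injectivity of $\Psi$, Lemma~\ref{GROUP} gives $\Psi(\sigma)(x_i)=\rho(\sigma)(x_i)|_{\widehat{q}=0}$, which by (\ref{OOH}) equals $\sum_j a_j\xi_j$---the standard-monomial expansion of $\sigma(x_i)\in W_n$. Since $W_n$ and $B_n$ share the same standard-monomial ${\bf k}$-basis by Proposition~\ref{PWA2}(3), distinct $\sigma_1,\sigma_2\in\text{End}(W_n)$ must disagree on some $\sigma_i(x_j)$, and hence so must $\Psi(\sigma_1)$ and $\Psi(\sigma_2)$. For the restriction to automorphisms, functoriality yields $\rho(\sigma^{-1})=\rho(\sigma)^{-1}$, so $\rho(\sigma)\in\text{Aut}(\widehat{A_n})$ and Theorem~\ref{GRHOM} promotes $\Psi(\sigma)$ to a Poisson automorphism of $B_n$. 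The principal obstacle I expect is the verification in the second step: that the candidate $\rho(\sigma)$, defined through (\ref{OOH}) on the generators, actually extends to a ring endomorphism of $\widehat{A_n}$. Naively multiplying the commutator relation $[x_j,x_i]=\widehat{q}$ in $A_{\widehat{q}}$ by powers of $(\widehat{q}+1)$ introduces extraneous $(\widehat{q}+1)^{k(x_i)+k(x_j)}$ factors that must be absorbed, and it is precisely this bookkeeping---together with the flexibility of the exponents $k(f)$ and $\ell_j$ afforded by Lemma~\ref{OHH}---on which the construction hinges.
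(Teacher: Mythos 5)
Your overall strategy (conjugate by $\psi$, invoke Lemma~\ref{OHH}, land in $B_n$ via $\widehat{\Gamma}$, and quote Theorem~\ref{GRHOM}) is the right one, and your injectivity argument via the shared standard-monomial basis of Proposition~\ref{PWA2}(3) matches the paper's. But the step you yourself flag as the ``key computational step'' is not bookkeeping that can be absorbed --- it is an obstruction that defeats the factorization you propose. If you set $\rho(\sigma)(x_i)=(\widehat{q}+1)^{k(x_i)}(\psi^{-1}\sigma\psi)(x_i)$, then for a canonically conjugate pair $j=2\ell$, $i=2\ell-1$ you get
$$\rho(\sigma)(x_j)\rho(\sigma)(x_i)-\rho(\sigma)(x_i)\rho(\sigma)(x_j)=(\widehat{q}+1)^{k(x_i)+k(x_j)}\,\widehat{q},$$
which equals $\widehat{q}$ in $\widehat{A_n}$ only if $k(x_i)+k(x_j)=0$; since the exponents supplied by Lemma~\ref{OHH} are positive whenever $\psi^{-1}\sigma\psi(x_i)$ genuinely involves negative powers of $\widehat{q}$ (the typical case), no choice of the exponents makes $\rho(\sigma)$ a ring endomorphism of $\widehat{A_n}$. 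So the intermediate monomorphism $\rho:\text{End}(W_n)\to\text{End}(\widehat{A_n})$ does not exist as described, and $\Psi=\varphi\circ\rho$ cannot literally be formed.

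The paper escapes this by never constructing such a $\rho$. It defines $\Psi(\sigma)$ pointwise on $B_n$: for $g\in B_n$ pick any lift $f\in\widehat{A_n}$ with $\widehat{\Gamma}(f)=g$, use Lemma~\ref{OHH} to get $(\psi^{-1}\sigma\psi)((\widehat{q}+1)^{k(f)}f)\in\widehat{A_n}$, and set $\Psi(\sigma)(g)=\widehat{\Gamma}\bigl((\psi^{-1}\sigma\psi)((\widehat{q}+1)^{k(f)}f)\bigr)$. The normalization is harmless precisely because it is killed by $\widehat{\Gamma}$ (since $\widehat{\Gamma}(\widehat{q}+1)=1$, one still has $\widehat{\Gamma}((\widehat{q}+1)^{k(f)}f)=g$), the choice of $k(f)$ and of the lift $f$ is immaterial by the well-definedness argument in the first paragraph of the proof of Lemma~\ref{GROUP}, and multiplicativity together with the Poisson property are then verified at the level of $B_n$, where the extraneous $(\widehat{q}+1)$-powers have already evaluated to $1$. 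In short: the relations of $\widehat{A_n}$ never need to be respected upstairs, only the images under $\widehat{\Gamma}$ downstairs. To repair your write-up, replace the construction of $\rho$ by this elementwise definition of $\Psi(\sigma)$ and rerun the arguments of Lemma~\ref{GROUP} and Theorem~\ref{GRHOM} for the composite $\widehat{\Gamma}\circ(\psi^{-1}\sigma\psi)\circ\widehat{\Gamma}^{-1}$ directly.
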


\begin{proof}
Let $\sigma\in \text{End}(W_n)$ and $g\in B_n$. By Proposition~\ref{RIHOMO}, there exists an element
$f\in \widehat{A_n}$ such that $\widehat{\Gamma}(f)=g$ and, by Lemma~\ref{OHH},
there exists a positive integer $k(f)$ such that
$$(\psi^{-1}\sigma\psi)((\widehat{q}+1)^{k(f)}f)=(\widehat{q}+1)^{k(f)}(\psi^{-1}\sigma\psi)(f)\in (\widehat{q}+1)\widehat{A_n}\subseteq\widehat{A_n}.$$
Note that  $\widehat{\Gamma}((\widehat{q}+1)^{k(f)}f)=(\widehat{\Gamma}(\widehat{q}+1))^{k(f)}\widehat{\Gamma}(f)=g$.
Set
$$\Psi(\sigma)(g):=\widehat{\Gamma}(\psi^{-1}\sigma\psi)((\widehat{q}+1)^{k(f)}f)
=\widehat{\Gamma}(\psi^{-1}\sigma\psi)\widehat{\Gamma}^{-1}(g).$$
$$\begin{picture}(400,200)(-100,-100) 
                                                               \put(100,88){$\sigma$}
                                \put(-10,80){$W_n$}            \put(15,83){\vector(1,0){185}}    \put(210,80){$W_n$}
\put(-15,45){$\psi$}            \put(-5,12){\vector(0,1){60}}                                    \put(215,75){\vector(0,-1){60}}\put(220,45){$\psi^{-1}$}
                                \put(-10,2){$A_{\widehat{q}}$}                                   \put(210,2){$A_{\widehat{q}}$}
                                \put(10,2){$\supseteq$}        \put(85,10){$\psi^{-1}\sigma\psi$}\put(197,2){$\subseteq$}
                                \put(25,0){$\widehat{A_n}$}    \put(40,5){\vector(1,0){135}}     \put(180,0){$\widehat{A_n}$}
\put(20,-35){$\widehat{\Gamma}$}\put(30,-10){\vector(0,-1){55}}                            \put(185,-10){\vector(0,-1){55}}\put(190,-35){$\widehat{\Gamma}$}
                                \put(60,-70){$\Psi(\sigma)=\widehat{\Gamma}(\psi^{-1}\sigma\psi)\widehat{\Gamma}^{-1}$}
                                \put(25,-80){$B_n$}            \put(40,-75){\vector(1,0){135}}   \put(180,-80){$B_n$}
\end{picture}$$
Note that $\Psi(\sigma)$ is determined independently to the choice of positive integer $k(f)$ by the first paragraph in the proof of Lemma~\ref{GROUP} and thus we may assume that $\psi^{-1}\sigma\psi\in \text{End}(\widehat{A_n})$.
It follows that $\Psi(\sigma)=\varphi(\psi^{-1}\sigma\psi)$ by (\ref{GROUPHOM})
and thus
$\Psi$ is a semigroup
homomorphism from $\text{End}(W_n)$ into $\text{P.End}(B_n)$ such that its restriction
$\Psi|_{\text{Aut}(W_n)}$ is a group
homomorphism  into $\text{P.Aut}(B_n)$  by Theorem~\ref{GRHOM} and (\ref{OH1}).

Suppose that $\sigma,\mu\in\text{End}(W_n)$ such that $\sigma\neq\mu$. Then,  for some $i$, $\sigma(x_i)\neq\mu(x_i)$ in $W_n$.
Hence $\Psi(\sigma)(x_i)=\sigma(x_i)\neq\mu(x_i)=\Psi(\mu)(x_i)$ in $B_n$ by (\ref{OOH}) and thus
$\Psi(\sigma)\neq\Psi(\mu)$. It follows that $\Psi$ is a monomorphism.
\end{proof}

\begin{rem}
(1) Kanel-Belov and Kontsevich conjectured in \cite[Conjecture 1]{BeKo} that
the automorphism group of the $n$-th Weyl algebra $W_n$  is isomorphic to the Poisson automorphism group of the $n$-th Poisson Weyl algebra $B_n$. We expect that $\Psi |_{\text{Aut}(W_n)}$ in Theorem~\ref{WCASE} is an isomorphism and thus that the  Kanel-Belov and Kontsevich's conjecture is true.

(2) Recall Dixmier's conjecture: Every endomorphism of the $n$-th Weyl algebra is an automorphism. Here is the Poisson analogue of Dixmier's conjecture:  Every Poisson endomorphism of the $n$-th Poisson Weyl algebra is a Poisson automorphism.
\end{rem}

\noindent
{\bf Acknowledgments} The second author is supported by National  Research Foundation of Korea,  NRF-2017R1A2B4008388,
and thanks the Korea Institute for Advanced Study for the warm hospitality during the preparation of this article.

\bibliographystyle{amsplain}

\providecommand{\bysame}{\leavevmode\hbox to3em{\hrulefill}\thinspace}
\providecommand{\MR}{\relax\ifhmode\unskip\space\fi MR }
\providecommand{\MRhref}[2]{%
  \href{http://www.ams.org/mathscinet-getitem?mr=#1}{#2}
}
\providecommand{\href}[2]{#2}

\end{document}